\def\l@subsection{\@tocline{2}{0pt}{2.5pc}{5pc}{}}
\DeclareMathOperator{\sgn}{sgn}
\DeclareSymbolFont{largesymbol}{OMX}{yhex}{m}{n}
\DeclareMathAccent{\Widehat}{\mathord}{largesymbol}{"62}
\newcommand*\di{\mathop{}\!\mathrm{d}}
\def\e{\varepsilon}
\newcommand{\tcr}[1]{\textcolor{red}{#1}}
\numberwithin{equation}{section}              
\newtheorem{theorem}{Theorem}[section]
\newtheorem{lemma}{Lemma}[section]
\newtheorem*{proposition*}{Proposition}
\newtheorem*{corollary*}{Corollary}
\newtheorem{definition}{Definition}[section]
\newtheorem*{definitions*}{Definitions}
\newtheorem*{acknowledgements*}{Acknowledgements}
\newtheorem*{conjecture*}{\bf Conjecture}
\newtheorem*{example*}{\bf Example}
\theoremstyle{remark}
\newtheorem{remark}{\bf Remark}[section]
\begin{document}
\date{}                                     

\author{Yu Gao}
\address[Y. Gao]{Department of Applied Mathematics, The Hong Kong Polytechnic University, Hung Hom, Kowloon, Hong Kong}
\email{mathyu.gao@polyu.edu.hk}

\author{Hao Liu}
\address[H. Liu]{School of Mathematical Sciences and Institute of Natural Sciences, Shanghai Jiao Tong University, Shanghai, China,}
\email{mathhao.liu@sjtu.edu.cn}

\author{Tak Kwong Wong}
\address[T. K. Wong]{Department of Mathematics, The University of Hong Kong, Pokfulam, Hong Kong.}
\email{takkwong@maths.hku.hk}

\title[Stability of peakons]{Stability of peakons of the Camassa-Holm equation beyond wave breaking}

\maketitle

\begin{abstract}
Using a generalized framework that consists of evolution of the solution to the Camassa-Holm equation and its energy measure, we establish the global-in-time orbital stability of peakons with respect to  the perturbed (energy) conservative solutions to the Camassa-Holm equation. Especially, we extend the $H^1$-stability result obtained by Constantin and Strauss (Comm. Pure Appl. Math., 53(5), 603-610, 2000) globally-in-time, even after the perturbed solutions experience wave breaking. In addition, our result also shows that the singular part of the energy measure of the perturbed solutions will remain stable for all times. 
\end{abstract}
{\small\keywords{\textbf{\textit{\emph{MSC 2020}}:} 35B35, 35Q51 }}

{\small\keywords{\textbf{\textit{\emph{Keywrods}}:} orbital stability, integrable system, generalized conserved quantities, energy conservation, stability of singular energy measure}}

\section{Introduction}\label{sec:intro}
In this paper, we study the global-in-time orbital stability of peakons of the Camassa-Holm equation on the whole real line $\mathbb{R}$. For  any $x$, $t\in\mathbb{R}$, the Camassa-Holm equation is given by
\begin{align}\label{eq:CH}
u_t+uu_x+P_x=0,
\end{align}
where $u=u(x,t)$ is the unknown function.
Here, the function $P:=P(x,t)$ is a nonlocal source term given by
\[
P(x,t)= \frac{1}{2} \left[\varphi \ast \left(u^2+\frac{u_x^2}{2}\right)\right](x,t)=\frac{1}{2}\int_{\mathbb{R}}\varphi(x-y)\left(u^2+\frac{u_x^2}{2}\right)(y,t)\di y,
\]
where $\frac{1}{2} \varphi(x) :=\frac{1}{2}e^{-|x|}$ is the fundamental solution to the Helmholtz operator $I-\partial_{xx}$. Equation~\eqref{eq:CH} is an integrable system that has a bi-Hamiltonian structure and infinitely many conservation laws \cite{camassa1993integrable,fuchssteiner1981symplectic}. It is used to  model the propagation of unidirectional shallow water waves over a flat bottom \cite{camassa1993integrable,johnson2002camassa}.  Similar to the Korteweg-de Vries (KdV) equation, 
Equation~\eqref{eq:CH} has  a special class of soliton solutions known as peakons, which are in the form 
\[
c\varphi(x-ct)= ce^{-|x-ct|},
\]
where $c\in\mathbb{R}$ is any nonzero constant.  We are going to study the global-in-time orbital stability of these peakons. To illustrate our results clearly, let us first introduce the results of (energy) conservative solutions, as well as previous results of the stability of peakons.

For any initial data $\bar{u}\in H^s(\mathbb{R})$ with $s>3/2$, the Camassa-Holm equation has a unique solution in $u\in C([0,T];H^s(\mathbb{R}))$ for some $T>0$, and for these solutions, the following two quantities are conserved \cite{li2000well,rodriguez2001cauchy}:
\begin{align}\label{eq:constant}
E(u)=\int_{\mathbb{R}}(u^2+u_x^2)\di x \quad \mbox{and}\quad F(u)= \int_{\mathbb{R}}(u^3+uu_x^2)\di x.
\end{align}
It was then proved  in  \cite{constantin2000global} that if the initial datum $\bar{u}\in H^1(\mathbb{R})$ satisfies $\bar{u}-\bar{u}_{xx}\in\mathcal{M}_+(\mathbb{R})$,  the Camassa-Holm equation has a unique global-in-time solution $u\in C([0,\infty);H^1(\mathbb{R}))$ with $E(u)$ and $F(u)$ conserved. Here, $\mathcal{M}_+(\mathbb{R})$ stands for the set of all finite positive Radon measures on $\mathbb{R}$. On the other hand, even for  smooth initial data, the solutions to the Camassa-Holm equation might blow up in finite time  \cite{brandolese2014local,camassa1993integrable,camassa1994new,constantin2000existence,constantin1998wave,mckean2004breakdown}, and at the blow-up time the solution $u$ remains continuous while its derivative $u_x$ tends to negative infinity. This blow-up phenomenon is known as the wave breaking in the literature. Before the perturbed solution $u$ experience wave breaking, the orbital stability of peakons was proved by Constantin and Strauss in \cite{constantin2000stability} based on a very clever use of the two conserved quantities $E(u)$ and $F(u)$ defined by \eqref{eq:constant}. Indeed, it was commented in \cite[Section~3]{constantin2000stability} that their stability result is applicable only up to the wave breaking time of the perturbed solution $u$, as at the blow-up time there might be some ``energy loss" in the sense that the energy $E(u)$ defined by \eqref{eq:constant} is strictly smaller than the initial one, and the perturbed solution $u(\cdot,t)$ is not continuous in $H^1(\mathbb{R})$ at the blow-up time; see \cite{bressan2007global,bressan2005optimal,holden2007global}  for instance. Therefore, the stability results in  \cite{constantin2000stability} cannot be used beyond  the wave breaking time of the perturbed solution $u$.

However, the peakons exists globally, and for any initial data  $\bar{u} \in H^1(\mathbb{R})$, the solutions $u(\cdot,t) $ can be extended globally-in-time and uniquely after the wave breaking time in a conservative way, so that the solution $u(\cdot,t) \in H^1(\mathbb{R})$ for all $t\in \mathbb{R}$ and the energy $E(u)$ is conserved for a.e. $t\in \mathbb{R}$; see \cite{bressan2007global,bressan2005optimal,holden2007global} for instance. Moreover, the  energy $E(u)$ is strictly dissipative at some zero measure set of time $t$.  Hence, the nature question is: do we still have the orbital stability of peakons with respect to these global conservative solutions for all time $t$, especially after the wave breaking time?
As mentioned in \cite{el2009stability,molinet2018liouville} for instance, this still remains to be an open question since these conservative solutions fail to be continuous in $H^1(\mathbb{R})$ at the aforementioned zero measure set of time $t$.
The aim of this paper is to provide an affirmative answer to this question by using a generalized framework of the Camassa-Holm equation.

As mentioned above, the energy $E(u)$ might be strictly dissipative at the blow-up time; however, the conservations of $E(u)$ and $F(u)$ are essential for obtaining the orbital stability of peakons in \cite{constantin2000stability} before the wave breaking. Then it is quite natural to find some substitutes of $E(u)$ and $F(u)$, so that they are conserved for all time and we can utilize them in the same way as $E(u)$ and $F(u)$.  The strict dissipation of the energy $E(u)$ at the blow up time is because some of the total energy is transferred into a singular measure, which cannot be measured/detected by the solution $u$ itself; see \cite{bressan2007global,holden2007global} for instance. To describe the evolution of the total energy, we shall introduce the energy measure $\mu(t)$ which is conserved globally-in-time (i.e., ${\mu}(t)(\mathbb{R}) = {\mu}(0)(\mathbb{R})$ for all $t\in \mathbb{R}$) and $\di\mu_{ac}=(u^2+u_x^2)\di x$, where $\mu_{ac}$ is the absolutely continuous part of $\mu$ with respect to the Lebesgue measure; see \cite[Section 3]{holden2007global}. 
Based on this idea, the generalized framework of the Camassa-Holm equation is described as follows.
Assume that $u$ is sufficiently smooth, such as $u\in C^1_tC^1_x\cap C^0_tC^2_x$. Differentiating Equation~\eqref{eq:CH} with respect to the spatial variable $x$ yields
\begin{align}\label{eq:derivative of CH}
u_{xt}+u_x^2 + uu_{xx}+P_{xx}=0.
\end{align}
Multiplying \eqref{eq:CH} by $2u$ and multiplying \eqref{eq:derivative of CH} by $2u_x$, and then summing them up, we formally obtain the conservation equation for the energy density:
\begin{align}\label{eq:energy}
(u^2+u_x^2)_t+[u(u^2+u_x^2)]_x=[u(u^2-2P)]_x.
\end{align}
To describe the evolution of the total energy we may replace  $u^2+u_x^2$ in \eqref{eq:energy} with $\mu(t)$. Then supplement the original Camassa-Holm equation \eqref{eq:CH} with the resulting equation,
 the generalized framework for the Camassa-Holm equation is given by:
\begin{align}
	&u_t+uu_x+P_x=0,\label{eq:gCH1}\\
	&(\di\mu)_t+(u\di\mu)_x=[u(u^2-2P)\di x]_x,\label{eq:gCH2}\\
	&\di\mu_{ac}(t)=(u^2+u_x^2)(\cdot,t)\di x,\label{eq:gCH3}
\end{align}
where  the function $P$ now becomes $P:=\frac{1}{4}\varphi\ast\mu+\frac{1}{4} \varphi\ast u^2$, and $\varphi (x) := e^{-|x|}$. Here, we understand Equation~\eqref{eq:gCH2} in the distributional sense \eqref{eq:fourth}. 
The conservative solutions to the above generalized framework \eqref{eq:gCH1}-\eqref{eq:gCH3} is considered in the following space:
\begin{align}\label{eq:solution pair}
	\mathcal{D}=\Big\{(u,\mu):~u\in H^1(\mathbb{R}),~~\mu\in\mathcal{M}_+(\mathbb{R}),~~\di \mu_{ac}=(u^2+u_x^2)\di x\Big\}.
\end{align}
When the energy measure $\mu(t)$ is absolutely continuous with respect to the Lebesgue measure $\di x$, the above framework is the same as Equations~\eqref{eq:CH} and \eqref{eq:energy}. The above generalized framework is similar to the generalized framework for the Hunter-Saxton equation; see \cite[Eqt. (1.4)-(1.6)]{gao2021regularity} for instance. Similar ideas were also used in other models, such as the two component Camassa-Holm equaiton \cite{grunert2012global}, the Hunter-Saxton equation \cite{bressan2007asymptotic,antonio2019lipschitz}, and the Novikov equation \cite{chen2015existence}.
The definition of conservative solutions to the generalized framework \eqref{eq:gCH1}-\eqref{eq:gCH3} is as follows: 
\begin{definition}[Conservative solutions]\label{def:Conservative solutions}
For any initial datum $(\bar{u}, \bar{\mu}) \in \mathcal{D}$, the pair $(u(\cdot, t), \mu(t))$
is said to be a global-in-time conservative solution to the generalized framework  \eqref{eq:gCH1}-\eqref{eq:gCH3} subject to the initial datum $(\bar{u}, \bar{\mu})$, if the pair $(u(\cdot,t),\mu(t))$ satisfies
\begin{enumerate}[(i)]
\item $u \in L^\infty(\mathbb{R}; H^1(\mathbb{R}))$ and $u(\cdot,t) \in H^1(\mathbb{R})$ for all $t$ $\in \mathbb{R}$. Moreover, for any $T>0$, the map $t \mapsto u(\cdot, t)$ is Lipschitz continuous from $[-T,T]$ into $L^2(\mathbb{R})$, and $\mu\in C(\mathbb{R};\mathcal{M}_+(\mathbb{R}))$;
\item $(u(\cdot,0),\mu(0))=(\bar{u},\bar{\mu})$, and $\di\mu(t)= (u^2+u_x^2)(\cdot,t)\di x$ for a.e. $t\in\mathbb{R}$; 
\item Equation~\eqref{eq:gCH1} is satisfied in the weak sense: for any test function $\phi\in C_c^\infty(\mathbb{R}\times \mathbb{R})$,
\begin{align}\label{eq:weakformula}
\int_{\mathbb{R}}\int_{\mathbb{R}}u\phi_t-\phi\left(uu_x + P_x\right)\di x\di t=0;
\end{align}
\item (Conservation of energy) Equation~\eqref{eq:gCH2} is satisfied in the distributional sense: for any test function $\phi\in C^{\infty}_c(\mathbb{R}\times \mathbb{R})$, the identity 
\begin{equation}\label{eq:fourth}
\int_{\mathbb{R}}\int_{\mathbb{R}} \left( \phi_{t} + u \phi_{x}\right) \di \mu(t) \di t = \int_{\mathbb{R}}\int_{\mathbb{R}} \phi_{x} u (u^2-2P) \di x \di t
\end{equation}
holds; and
\item Equation~\eqref{eq:gCH3} holds for all $t \in \mathbb{R}$. 
\end{enumerate}
The last condition is equivalent to say that $(u(t),\mu(t))\in \mathcal{D}$  for all times $t\in \mathbb{R}$.
\end{definition}
The existence and uniqueness of conservative solutions ($u, \mu$) in the sense of Definition \ref{def:Conservative solutions} with initial datum $(\bar{u},\bar{\mu})\in \mathcal{D}$ has been obtained based on the characteristics method; see \cite{bressan35uniqueness,bressan2007global,holden2007global} for example.  
Our main result for stability of peakons is as follows:
\begin{theorem}\label{thm:onepeakonthm}
Let $0<\e<1$ and $c>0$ be any given constants, and the initial data $(\bar{u},\bar{\mu})\in\mathcal{D}$, where the space $\mathcal{D}$ was defined in \eqref{eq:solution pair} above, satisfy
\[
\bar{\mu}_s(\mathbb{R})+\|\bar{u}-c\varphi\|_{H^1}^2\leq\frac{c^2}{2}\left[\frac{\e}{6(1+c)}\right]^8,
\]
where $\varphi(x) :=e^{-|x|}$, and $\bar{\mu}_s$ is the singular part of the initial energy measure $\bar{\mu}$.
If $(u,\mu)$ is a conservative solution to the generalized framework \eqref{eq:gCH1}-\eqref{eq:gCH3} of the Camassa-Holm equation subject to the initial datum $(\bar{u},\bar{\mu})$ in the sense of Definition \ref{def:Conservative solutions},
then for any $t\in\mathbb{R}$, we have
\[
\mu_s(t)(\mathbb{R})+\|u(\cdot,t)-c\varphi(\cdot-\xi(t))\|_{H^1}^2\leq \e^2,
\]
where $\mu_s(t)$ is the singular part of the energy measure $\mu(t)$, and the spatial location $\xi(t)\in\mathbb{R}$ is any point where the function $u(\cdot,t)$ attains its maximum.
\end{theorem}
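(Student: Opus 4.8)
The plan is to transplant the Constantin--Strauss mechanism to the generalized framework by replacing the classical conserved quantities $E(u)$ and $F(u)$ with measure-theoretic substitutes that survive wave breaking. The two quantities I would use are
\[
\mathcal{E}(t):=\mu(t)(\mathbb{R})\qquad\text{and}\qquad \mathcal{F}(t):=\int_{\mathbb{R}}u(\cdot,t)\,\di\mu(t).
\]
Conservation of $\mathcal{E}$ is already part of the framework. The first genuine task is to show $\mathcal{F}(t)\equiv\mathcal{F}(0)$. Formally, differentiating and using \eqref{eq:gCH1}--\eqref{eq:gCH2} together with integration by parts gives
\[
\frac{\di}{\di t}\int_{\mathbb{R}}u\,\di\mu=-\int_{\mathbb{R}}P_x\,(\di\mu+u^2\,\di x);
\]
since $P=\tfrac14\varphi\ast\mu+\tfrac14\varphi\ast u^2$ satisfies $P-P_{xx}=\tfrac12(\mu+u^2)$, one has $\di\mu+u^2\,\di x=2(P-P_{xx})\,\di x$, so the right-hand side equals $-2\int(PP_x-P_xP_{xx})\,\di x=0$. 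Note that $\mathcal{F}$ collapses to $F(u)=\int(u^3+uu_x^2)\,\di x$ when $\mu$ is absolutely continuous, so it is the correct generalization of $F$.

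The second ingredient is a pair of identities relating $\mathcal{E}$, $\mathcal{F}$ and $M(t):=\max_x u(\cdot,t)$, the latter being attained at $\xi(t)$ because $H^1(\mathbb{R})\hookrightarrow C_0(\mathbb{R})$. Writing $\psi:=c\varphi(\cdot-\xi(t))$, the identity $\psi-\psi_{xx}=2c\,\delta_{\xi(t)}$ yields $\int(u\psi+u_x\psi_x)\,\di x=2cM(t)$, and combining this with $E(u)=\mu_{ac}(t)(\mathbb{R})=\mathcal{E}-\mu_s(t)(\mathbb{R})$ (valid for \emph{every} $t$ by condition (v)) gives the exact distance identity
\[
\mu_s(t)(\mathbb{R})+\|u(\cdot,t)-c\varphi(\cdot-\xi(t))\|_{H^1}^2=\mathcal{E}+2c^2-4cM(t).
\]
I would then prove the generalized Constantin--Strauss inequality $\mathcal{F}\le M(t)\mathcal{E}-\tfrac23 M(t)^3$ by splitting $\mathcal{F}=F(u)+\int u\,\di\mu_s$: the Constantin--Strauss lemma of \cite{constantin2000stability} bounds $F(u)\le M\mu_{ac}(\mathbb{R})-\tfrac23 M^3$, while $u\le M$ pointwise bounds $\int u\,\di\mu_s\le M\mu_s(\mathbb{R})$, and adding them uses $\mu_{ac}(\mathbb{R})+\mu_s(\mathbb{R})=\mathcal{E}$. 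Both displays reduce to equalities for the exact peakon, where $\mathcal{E}=2c^2$, $\mathcal{F}=\tfrac43c^3$, $M=c$.

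The conclusion is then read off from the cubic $p(m):=\tfrac23 m^3-\mathcal{E}m+\mathcal{F}$, whose coefficients are \emph{time-independent} by conservation and which satisfies $p(M(t))\le 0$ for all $t$. At the peakon values $p$ degenerates to $\tfrac23(m-c)^2(m+2c)$ with a double root at $m=c$; with $\delta:=\mathcal{E}-2c^2$ and $\gamma:=\mathcal{F}-\tfrac43c^3$, the bound $m+2c\ge 2c$ on the positive branch turns $p(M)\le 0$ into $\tfrac43 c(M-c)^2\le\delta M-\gamma$. Since $\mathcal{F}>0$ forces $u(\cdot,t)$ to be positive somewhere, $M(t)>0$ selects exactly this branch, and the distance identity forces $M(t)\le c+\tfrac{\delta}{4c}\le 2c$; hence $|M(t)-c|\le(\tfrac32|\delta|+\tfrac{3}{4c}|\gamma|)^{1/2}$ uniformly in $t$, and substituting back gives
\[
\mu_s(t)(\mathbb{R})+\|u(\cdot,t)-c\varphi(\cdot-\xi(t))\|_{H^1}^2=\delta-4c(M(t)-c)\le|\delta|+4c\Big(\tfrac32|\delta|+\tfrac{3}{4c}|\gamma|\Big)^{1/2}.
\]
Finally, writing $\eta:=\bar\mu_s(\mathbb{R})+\|\bar u-c\varphi\|_{H^1}^2$, the reverse triangle inequality for $E(\bar u)=\|\bar u\|_{H^1}^2$, the cubic nature of $F$, and the Sobolev bound $\|\bar u\|_\infty\lesssim\|\bar u\|_{H^1}$ give $|\delta|,|\gamma|\lesssim(1+c)^{O(1)}\sqrt\eta$. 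The two nested square roots compose to the loss $\eta\mapsto\eta^{1/4}$, so the right-hand side above is $\lesssim c(1+c)^{O(1)}\eta^{1/4}$; careful bookkeeping of the constants converts the hypothesis $\eta\le\tfrac{c^2}{2}[\tfrac{\e}{6(1+c)}]^8$ into the desired bound $\le\e^2$, \emph{uniformly in $t$} because $\mathcal{E},\mathcal{F}$ and hence the whole chain are time-independent.

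I expect the principal obstacle to be the rigorous justification of the conservation $\mathcal{F}(t)=\mathcal{F}(0)$. The formal computation above manipulates $u$ and $P_x$ as admissible test functions, whereas in the conservative class $u(\cdot,t)$ is only $H^1$ and $\mu(t)$ is merely a finite measure, so \eqref{eq:weakformula} and \eqref{eq:fourth} cannot be fed $u$ directly. The honest route is either to mollify and pass to the limit, exploiting the Lipschitz-in-time $L^2$ regularity of $u$ and the continuity $\mu\in C(\mathbb{R};\mathcal{M}_+(\mathbb{R}))$ from Definition \ref{def:Conservative solutions}, or to transport the identity to the Lagrangian variables of \cite{bressan2007global,holden2007global}, in which $\int u\,\di\mu$ becomes a manifestly constant integral. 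By contrast the distance identity and the generalized Constantin--Strauss inequality hold for \emph{all} $t$ with no extra care at the exceptional times where $\mu_s(t)\ne 0$, which is precisely what allows the argument to reach beyond wave breaking.
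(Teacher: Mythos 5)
Your proposal is correct and follows essentially the same route as the paper: the same two conserved substitutes $\tilde E(u,\mu)=\mu(\mathbb{R})$ and $\tilde F(u,\mu)=\int u\,\di\mu$, the same exact distance identity via $\psi-\psi_{xx}=2c\,\delta_{\xi}$, the same splitting $\tilde F=F(u)+\int u\,\di\mu_s$ to extend the Constantin--Strauss inequality, and the same cubic $(M-c)^2(M+2c)$ argument composed with the $\eta\mapsto\eta^{1/4}$ loss. You also correctly identified the one genuinely delicate point --- the rigorous conservation of $\tilde F$ --- and the resolution you sketch (passing to the Lagrangian variables of \cite{holden2007global}, or equivalently an integral/test-function form exploiting $\mu\in C(\mathbb{R};\mathcal{M}_+(\mathbb{R}))$ and the Lipschitz-in-time $L^2$ continuity of $u$) is exactly what the paper does in its Lemmas on the weak conservation law for $u\,\di\mu$.
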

Since the Camassa-Holm equation \eqref{eq:CH} is invariant under the transformation: $u(x,t)\mapsto -u(-x,t)$, one can obtain similar results for the anti-peakon cases, i.e., $c<0$.
Compared with previous results for the stability of peakons, our result shows the following novelties:
\begin{enumerate}[(i)]
\item The orbital stability of peakons in the $H^1(\mathbb{R})$ norm is obtained for all times $t$, so it applies even after the wave breaking time of the perturbed solution $u$.
\item Our results give the estimates for $u$ and $\mu_s(t)$ together; in particular, the singular part $\mu_s(t)$ of the energy measure will also remain stable for all times $t$.
\end{enumerate}

The rest of this paper is organized as follows. In Section \ref{sec:general},  we will introduce the characteristics method for this generalized framework and present an equivalent semilinear system. Using the energy measure $\mu$, we will then define the extensions of $E(u)$ and $F(u)$ in the generalized framework; see \eqref{eq:newenergy} below. In addition, we will also show that these extensions are actually conserved for all times. The main result for stability of peakons will be proved in Section~\ref{sec:stable} by utilizing these two conserved quantities.

\section{Generalized framework and conservative quantities}\label{sec:general}

\subsection{Generalized framework and system of $\alpha$-variable}\label{sec:charac}
 
The existence and uniqueness of conservative solutions ($u, \mu$) in the sense of Definition \ref{def:Conservative solutions} with initial datum $(\bar{u},\bar{\mu})\in \mathcal{D}$ has been obtained in \cite{bressan35uniqueness,bressan2007global,holden2007global}. The idea is to use the characteristics method to transfer the Camassa-Holm equation into a semilinear system. 
Next, we follow the same ideas in \cite{bressan35uniqueness,bressan2007global,holden2007global} and introduce some details for a similar but slightly different self-contained semilinear system of ordinary differential equations, which is equivalent to the system \eqref{eq:gCH1}-\eqref{eq:gCH3}. 
To this end, we formally assume $\di\mu(t)=(u^2+u_x^2)(\cdot,t)\di x$ for all $t\in\mathbb{R}$. Let us introduce the $\alpha$-coordinate as follows: for any $(\bar{u},\bar{\mu})\in\mathcal{D}$, we define the function $\bar{x}$ via
\begin{align}
\bar{x}(\alpha)+\bar{\mu}((-\infty,\bar{x}(\alpha)))\leq \alpha\leq \bar{x}(\alpha)+\bar{\mu}((-\infty,\bar{x}(\alpha)])
\end{align}
for every $\alpha \in \mathbb{R}$.
The use of the $\alpha$-variable transfers the initial energy measure $\bar{\mu}$ into another measure with the density function $1-\bar{x}'$ in $L^1(\mathbb{R})$, namely
\[
\di \bar{\mu}=\bar{x}\# [(1-\bar{x}')\di \alpha].
\]
Here, the symbol $\#$ stands for push-forward of a measure, i.e., for any Lebesgue measurable set $A\subset\mathbb{R}$, we have $\bar{\mu}(A)=\int_{\bar{x}^{-1}(A)} (1-\bar{x}')(\alpha) \di \alpha$. This function $\bar{x}:=\bar{x}(\alpha)$ is Lipschitz continuous with a Lipschitz constant bounded by $1$; see \cite[Proposition 2.1]{gao2021regularity} for instance.  Next, we consider the flow map satisfying
\begin{align}\label{eq:ut}
\frac{\di}{\di t}y(\alpha,t)=u(y(\alpha,t),t),\quad y(\alpha,0)=\bar{x}(\alpha),
\end{align}
which implies
\begin{align}\label{eq:flowmap}
y(\alpha,t)=y(\alpha, 0)+\int_0^tu(y(\alpha,s),s)\di s.
\end{align}
It follows from \eqref{eq:ut} and the energy equation \eqref{eq:energy} that 
\begin{align}
\frac{\di}{\di t}\int_{-\infty}^{y(\alpha,t)}(u^2+u_x^2)(x,t)\di x=(u^3-2uP)(y(\alpha,t),t),
\end{align}
and hence, a direct integration yields
\begin{align}\label{eq:energyflow}
\int_{-\infty}^{y(\alpha,t)}(u^2+u_x^2)(x,t)\di x=\int_{-\infty}^{y(\alpha,0)}(\bar{u}^2+\bar{u}_x^2)(x)\di x+\int_0^t(u^3-2uP)(y(\alpha,s),s)\di s.
\end{align}
Combining \eqref{eq:flowmap} and \eqref{eq:energyflow} yields
\begin{equation}\label{eq:combine}
\begin{aligned}
y(\alpha,t)+\int_{-\infty}^{y(\alpha,t)}(u^2+u_x^2)(x,t)\di x=\alpha+\int_0^t(u+u^3-2uP)(y(\alpha,s),s)\di s.
\end{aligned}
\end{equation}
Define
\begin{align}\label{eq:def of beta}
\beta(\alpha,t):=y(\alpha,t)+\int_{-\infty}^{y(\alpha,t)}(u^2+u_x^2)(x,t)\di x.
\end{align}
Then, we have $\beta(\alpha,0)=\alpha$ 
and
\begin{align}\label{eq:betat}
\frac{\di}{\di t}\beta(\alpha,t)=(u+u^3-2uP)(y(\alpha,t),t).
\end{align} 
we can re-write \eqref{eq:def of beta} as
\begin{align}
\int_{-\infty}^{y(\alpha,t)}(u^2+u_x^2)(x,t)\di x=\beta(\alpha,t)-y(\alpha,t),
\end{align}
and hence, a direct differentiation with respect to $\alpha$ yields
\[
\partial_\alpha[\beta(\alpha,t)-y(\alpha,t)]=(u^2+u_x^2)(y(\alpha,t),t)\partial_\alpha y(\alpha,t).
\]
Therefore, we formally have
\begin{align}\label{eq:mut}
\di \mu(t) = y(\cdot,t)\# \Big(\partial_\alpha[\beta(\alpha,t)-y(\alpha,t)] \di \alpha\Big).
\end{align}
Using the above relations, we also have
\begin{align}\label{eq:P}
P(y(\alpha,t),t)=\frac{1}{4}\int_{\mathbb{R}}e^{-|y(\alpha,t)-y(\theta,t)|}\Big[\beta_\theta(\theta,t)-y_\theta(\theta,t)+u^2(y(\theta,t),t)y_\theta(\theta,t)\Big]\di\theta,
\end{align}
and
\begin{align}\label{eq:Pderivative}
P_x(y(\alpha,t),t)=-\frac{1}{4}\int_{\mathbb{R}}\sgn(\alpha-\theta)e^{-|y(\alpha,t)-y(\theta,t)|}\Big[\beta_\theta(\theta,t)-y_\theta(\theta,t)+u^2(y(\theta,t),t)y_\theta(\theta,t)\Big]\di\theta.
\end{align}
Combining \eqref{eq:CH}, \eqref{eq:ut}, and \eqref{eq:betat} yields the following self-contained system:
\begin{align}
&\frac{\di}{\di t}y(\alpha,t)=u(y(\alpha,t),t),\quad y(\alpha,0)=\bar{x}(\alpha),\label{eq:g1}\\
&\frac{\di}{\di t}\beta(\alpha,t)=(u+u^3-2uP)(y(\alpha,t),t),\quad \beta(\alpha,0)=\alpha,\label{eq:g2}\\
&\frac{\di}{\di t}u(y(\alpha,t),t)=-P_x(y(\alpha,t),t),\quad u(y(\alpha,0),0)=\bar{u}(\bar{x}(\alpha)).\label{eq:g3}
\end{align}
For any initial datum $(\bar{u},\bar{\mu})\in\mathcal{D}$, the global solutions $(y(\alpha,t),\beta(\alpha,t), u(y(\alpha,t),t))$ to this system, namely Equation~\eqref{eq:g1}-\eqref{eq:g3}, was obtained in \cite{holden2007global}. It is worth noting that the notations used in \cite{holden2007global} are slightly different from ours, where they used $H(\alpha,t):=\beta(\alpha,t)-y(\alpha,t)$ as one of the unknown functions instead. Subtracting \eqref{eq:g1} from \eqref{eq:g2} yields the evolution equation for $H(\alpha,t)$.  
\begin{remark}\label{rmk:regularity}
Let $\zeta(\alpha,t):=y(\alpha,t)-\alpha$, $U(\alpha,t):=u(y(\alpha,t),t)$, and $H(\alpha,t):=\beta(\alpha,t)-y(\alpha,t)$. Then it follows from \cite[Theorem 2.8]{holden2007global} that
\[
(\zeta,U,H)\in C^1(\mathbb{R};E\cap [W^{1,\infty}(\mathbb{R})]^3),
\]
where $E=V\times H^1(\mathbb{R})\times V$ and $V=\{f\in C_b(\mathbb{R}):~~f_x\in L^2(\mathbb{R})\}$. Moreover, $y_\alpha(\cdot,t)\geq 0$, $H_\alpha(\cdot,t)=\beta_\alpha(\cdot,t)-y_\alpha(\cdot,t)\geq 0$, and  $P$, $P_x$ defined by \eqref{eq:P} and \eqref{eq:Pderivative} (as functions of $(\alpha,t)$) are in the space $C(\mathbb{R};H^1(\mathbb{R}))$; see \cite[Lemma 2.1]{holden2007global} for details. 
\end{remark}
For any given global solution $(y(\alpha,t),\beta(\alpha,t),u(y(\alpha,t),t))$ to the system \eqref{eq:g1}-\eqref{eq:g3}, the corresponding global conservative solutions $(u(t),\mu(t))$ to the generalized framework \eqref{eq:gCH1}-\eqref{eq:gCH3} will be given by  $u(x,t)=u(y(\alpha,t),t)$ for $x=y(\alpha,t)$ and global energy measure $\mu(t)$ will be given by \eqref{eq:mut}. The uniqueness of conservative solutions was proved via characteristics methods in \cite{bressan35uniqueness}. Since the global solution $(y(\alpha,t),\beta(\alpha,t), u(y(\alpha,t),t))$ to \eqref{eq:g1}-\eqref{eq:g3} can immediately provide the global conservative solution $(u(t),\mu(t))$ to the generalized framework \eqref{eq:gCH1}-\eqref{eq:gCH3}, this is equivalent to say that any (energy) conservative solution in the sense of Definition~\ref{def:Conservative solutions} can be represented by the solution to \eqref{eq:g1}-\eqref{eq:g3}.

\subsection{Conservative quantities under generalized framework}
As mentioned in Section~\ref{sec:intro}, the two quantities $E(u)$ and $F(u)$ defined in \eqref{eq:constant} are not conserved in general, due to the potential wave breaking. Under the generalized framework, for a pair $(u,\mu)\in\mathcal{D}$, the natural generalizations of $E(u)$ and $F(u)$ are defined by 
\begin{align}\label{eq:newenergy}
\tilde{E}(u, \mu)=\mu(\mathbb{R}) \quad\mbox{and}\quad \tilde{F}(u,\mu)=\int_{\mathbb{R}}u\di \mu.
\end{align}
Here, the generalized quantity $\tilde{E}$ indeed depends on the variable $\mu$ only, however, we still use the pair  $(u,\mu)$ in its argument to emphasis that $\tilde{E}$ is the total energy for the generalized solution $(u(t),\mu(t))$.
For a conservative solution $(u,\mu)$ to the Camassa-Holm equation in the sense of Definition~\ref{def:Conservative solutions}, we know that the (generalized) energy $\tilde{E}(u(t), \mu(t))$ is conserved, namely $\tilde{E}(u(t), \mu(t)) =\mu(t)(\mathbb{R}) = \bar{\mu}(\mathbb{R}) = \tilde{E}(\bar{u}, \bar{\mu})$ for all $t\in\mathbb{R}$. This energy conservation can be easily verified by using \eqref{eq:fourth} and the fact that $\mu\in C(\mathbb{R};\mathcal{M}_+(\mathbb{R}))$; for the methodology, see the argument in \cite[Remark 1.1]{gao2021regularity} for instance. Next, we are going to show the conservation of $\tilde{F}(u(t),\mu(t))$, which will play an essential role for proving the orbital stability of peakons. It is worth noting that for a classical solution $u$, the following conservation law holds:
\begin{align}
[u(u^2+u_x^2)]_t+[u^2(u^2+u_x^2)]_x+\left(Pu^2-\frac{3}{4}u^4+P^2-P_x^2\right)_x=0.
\end{align}
This motivates us to consider the following lemma for weak solutions:
\begin{lemma}\label{lmm:weakF}
Let $(u,\mu)$ be a conservative solution to the generalized framework \eqref{eq:gCH1}-\eqref{eq:gCH3} subject to the initial datum $(\bar{u},\bar{\mu})\in\mathcal{D}$ in the sense of Definition~\ref{def:Conservative solutions}. Then for any $\phi\in C_c^\infty(\mathbb{R}\times\mathbb{R})$, we have
\begin{align}\label{eq:weakF}
\int_{\mathbb{R}}\int_{\mathbb{R}}(\phi_t+u\phi_x)u\di\mu(t)\di t+\int_{\mathbb{R}}\int_{\mathbb{R}} \phi_x\left(Pu^2-\frac{3}{4}u^4+P^2-P_x^2\right) \di x\di t=0,
\end{align}
where $P=\frac{1}{4}\varphi\ast \mu+\frac{1}{4}\varphi\ast u^2$.
\end{lemma}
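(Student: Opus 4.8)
The plan is to pass to the Lagrangian ($\alpha$-)coordinates of Section~\ref{sec:charac}, where the conservative solution is represented by the $C^1$-in-time functions $(\zeta,U,H)$ of Remark~\ref{rmk:regularity}, prove the desired relation as a genuine pointwise conservation law in those coordinates, and then transport it back to the original variables. Throughout, write $\mathcal{P}(\alpha,t):=P(y(\alpha,t),t)$ and $\mathcal{Q}(\alpha,t):=P_x(y(\alpha,t),t)$, and set $\Phi(\alpha,t):=\phi(y(\alpha,t),t)$.

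First I would rewrite the two integrals in \eqref{eq:weakF} in the $\alpha$-coordinate. For the first integral, use the push-forward formula \eqref{eq:mut}, namely $\di\mu(t)=y(\cdot,t)\#(H_\alpha\,\di\alpha)$, together with the chain rule $\Phi_t=\phi_t(y,t)+u(y,t)\phi_x(y,t)$ (which follows from \eqref{eq:g1}); this turns $(\phi_t+u\phi_x)u\,\di\mu$ into $\Phi_t\,U H_\alpha\,\di\alpha$. For the second integral, change variables $x=y(\alpha,t)$ so that $\di x$ becomes $y_\alpha\,\di\alpha$ (valid since $y(\cdot,t)$ is nondecreasing and Lipschitz with $y(\pm\infty,t)=\pm\infty$), and use $\Phi_\alpha=\phi_x(y,t)\,y_\alpha$ to replace $\phi_x(y,t)\,y_\alpha$ by $\Phi_\alpha$. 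After these substitutions the identity \eqref{eq:weakF} becomes
\[
\int_\mathbb{R}\int_\mathbb{R}\Phi_t\,U H_\alpha\,\di\alpha\,\di t+\int_\mathbb{R}\int_\mathbb{R}\Phi_\alpha\,G\,\di\alpha\,\di t=0,\qquad G:=\mathcal{P}U^2-\tfrac34U^4+\mathcal{P}^2-\mathcal{Q}^2.
\]
Because $\phi$ has compact support and $y(\alpha,t)\to\pm\infty$ as $\alpha\to\pm\infty$, the function $\Phi$ is compactly supported in $(\alpha,t)$, so all integrals are over a bounded region and the boundary terms in the integrations by parts below vanish.

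Next I would reduce matters to a pointwise Lagrangian identity. Since $(\zeta,U,H)\in C^1$ and $\mathcal{P},\mathcal{Q}\in C(\mathbb{R};H^1)$ by Remark~\ref{rmk:regularity}, the quantities $UH_\alpha$ and $G$ are regular enough to integrate by parts: integrating the first integral by parts in $t$ and the second in $\alpha$ yields
\[
-\int_\mathbb{R}\int_\mathbb{R}\Phi\big[\partial_t(UH_\alpha)+\partial_\alpha G\big]\,\di\alpha\,\di t.
\]
Hence it suffices to prove the pointwise identity $\partial_t(UH_\alpha)+\partial_\alpha G=0$. To verify it, differentiate the ODE system \eqref{eq:g1}-\eqref{eq:g3} to get $U_t=-\mathcal{Q}$ and $H_t=U^3-2U\mathcal{P}$, and hence (swapping $\partial_t$ and $\partial_\alpha$, justified by the $C^1$-regularity) $\partial_t H_\alpha=\partial_\alpha(U^3-2U\mathcal{P})$. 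Differentiating the closed-form expressions \eqref{eq:P}-\eqref{eq:Pderivative} under the integral sign gives the two key relations
\[
\mathcal{P}_\alpha=y_\alpha\,\mathcal{Q},\qquad \mathcal{Q}_\alpha=y_\alpha\,\mathcal{P}-\tfrac12\big(H_\alpha+U^2y_\alpha\big),
\]
where the extra term in the second relation comes from the jump produced by differentiating $\sgn(\alpha-\theta)$. Substituting these into $\partial_t(UH_\alpha)+\partial_\alpha G$ and expanding, the cubic terms $\pm3U^3U_\alpha$, the mixed terms $\pm2\mathcal{P}UU_\alpha$, and then (after using $\mathcal{P}_\alpha=y_\alpha\mathcal{Q}$ and the formula for $\mathcal{Q}_\alpha$) the remaining terms $\pm\mathcal{Q}H_\alpha$, $\pm U^2y_\alpha\mathcal{Q}$, and $\pm2y_\alpha\mathcal{P}\mathcal{Q}$ all cancel in pairs, leaving $0$; this is the main computation.

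The step requiring the most care is the passage to $\alpha$-coordinates: one must justify the change of variables for the possibly singular energy measure $\mu$ and for the Lebesgue measure $\di x$ simultaneously, keeping track of the fact that the singular part of $\mu$ enters only through $H_\alpha$ while the flux $G$ sees only $y_\alpha$, and one must handle the sign-function subtleties (in particular the points where $y_\alpha=0$) when differentiating \eqref{eq:P}-\eqref{eq:Pderivative} to obtain the relation for $\mathcal{Q}_\alpha$. Once the problem is correctly transported to the smooth $\alpha$-coordinates, the verification of $\partial_t(UH_\alpha)+\partial_\alpha G=0$ is a direct, if slightly lengthy, algebraic cancellation.
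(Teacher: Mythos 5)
Your argument is correct and rests on the same backbone as the paper's proof: both pass to the Lagrangian coordinates of Section~\ref{sec:charac}, pull the measure integral back via \eqref{eq:mut}, integrate by parts in $t$, and invoke the a.e.\ identity $\partial_t H_\alpha=\partial_\alpha\big[(u^3-2uP)(y(\alpha,t),t)\big]$, i.e.\ \eqref{eq:int_diff}. Where you genuinely diverge is in the treatment of the nonlocal flux $P^2-P_x^2$. The paper keeps that term in Eulerian form and closes the argument by restricting to the full-measure set of times at which $\di\mu(t)=(u^2+u_x^2)\di x$, so that $P-P_{xx}=u^2+\tfrac12 u_x^2$ pointwise and \eqref{eq:last} follows from one Eulerian integration by parts. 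You instead transport \emph{both} integrals to the $\alpha$-variable and verify a single pointwise balance law $\partial_t(UH_\alpha)+\partial_\alpha G=0$, using the differentiation formulas $\mathcal{P}_\alpha=y_\alpha\mathcal{Q}$ and $\mathcal{Q}_\alpha=y_\alpha\mathcal{P}-\tfrac12\left(H_\alpha+U^2y_\alpha\right)$ obtained from \eqref{eq:P}--\eqref{eq:Pderivative}; I checked the cancellation and it is exact. Your route never needs to single out the a.e.\ set of times where $\mu$ is absolutely continuous: the atom produced by differentiating $\sgn(\alpha-\theta)$ yields the term $-\tfrac12\left(H_\alpha+U^2y_\alpha\right)$, which is exactly the Lagrangian avatar of $-(P-P_{xx})=-\tfrac12(\mu+u^2)$ and makes sense even when $\mu$ has a singular part. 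The price is that you must justify differentiating \eqref{eq:P}--\eqref{eq:Pderivative} under the integral sign in $\alpha$, which the paper's version avoids.

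One caution: the interchange $\partial_t H_\alpha=\partial_\alpha H_t$ does not follow merely from ``$C^1$-regularity'' as you assert. By Remark~\ref{rmk:regularity}, $H(\cdot,t)$ is only $W^{1,\infty}$ in $\alpha$, so $H_\alpha$ is defined a.e., and its a.e.\ time-differentiability with the claimed value is a nontrivial fact; the paper imports it from \cite[Lemma 2.4]{holden2007global}, and you should cite the same (or reproduce the argument that the derived variables $(y_\alpha,U_\alpha,H_\alpha)$ solve a linear ODE system in $t$ for a.e.\ fixed $\alpha$). With that reference in place, your remaining Fubini and integration-by-parts steps are legitimate, since $\Phi$ is compactly supported in $(\alpha,t)$ and $G$ is locally $W^{1,1}$ in $\alpha$.
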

\begin{proof}
Using \eqref{eq:mut} and \eqref{eq:g1}, we can apply a change of variables, and obtain
\begin{equation}\label{eq:twoterms}
\begin{aligned}
\int_{\mathbb{R}}\int_{\mathbb{R}}(\phi_t+u\phi_x)u\di\mu(t)\di t &=\int_{\mathbb{R}}\int_{\mathbb{R}}\frac{\di}{\di t}\phi(y(\alpha,t),t)u(y(\alpha,t),t)(\beta_\alpha-y_\alpha)(\alpha,t)\di\alpha\di t\\
&=-\int_{\mathbb{R}}\int_{\mathbb{R}}\phi(y(\alpha,t),t)\frac{\di}{\di t}[u(y(\alpha,t),t)(\beta_\alpha-y_\alpha)(\alpha,t)]\di\alpha\di t ,
\end{aligned}
\end{equation}
where we integrated by parts in the last equality. It follows from \cite[Lemma 2.4]{holden2007global} that for a.e. $(\alpha,t)\in \mathbb{R}^2$,
\begin{equation}\label{eq:int_diff}
\begin{aligned}
\frac{\partial }{\partial t} (\beta_\alpha-y_\alpha)(\alpha,t)= \frac{\partial }{\partial \alpha}(u^3-2uP) (y(\alpha,t),t).
\end{aligned}
\end{equation}
Combining \eqref{eq:g3} and \eqref{eq:int_diff} yields that for a.e. $(\alpha,t)\in \mathbb{R}^2$,
\begin{multline*}
 \frac{\partial }{\partial t} \left[u(y(\alpha,t),t)(\beta_\alpha-y_\alpha)(\alpha,t) \right] = -P_x(y(\alpha,t),t)(\beta_\alpha -y_\alpha)(\alpha,t)+ u(y(\alpha,t),t)\frac{\partial }{\partial \alpha}(u^3-2uP) (y(\alpha,t),t)\\
 = -P_x(y(\alpha,t),t)(\beta_\alpha -y_\alpha)(\alpha,t)+ \frac{3 }{4}\frac{\partial }{\partial \alpha}\left[u^4 (y(\alpha,t),t)\right]-2u(y(\alpha,t),t) \frac{\partial }{\partial \alpha}\left[(uP)(y(\alpha,t),t)\right].
\end{multline*}
Using the above identity, we can re-write \eqref{eq:twoterms} as follows:
\begin{equation}\label{eq:IntegralLHS}
\begin{aligned}
&\quad \int_{\mathbb{R}}\int_{\mathbb{R}}(\phi_t+u\phi_x)u\di\mu(t)\di t \\
&=\int_{\mathbb{R}}\int_{\mathbb{R}} \phi(y(\alpha,t),t)P_x(y(\alpha,t),t)(\beta_\alpha-y_\alpha)(\alpha,t)\di\alpha\di t\\
&\qquad -\int_{\mathbb{R}}\int_{\mathbb{R}}\phi(y(\alpha,t),t)\left(\frac{3 }{4}\frac{\partial }{\partial \alpha}\left[u^4 (y(\alpha,t),t)\right]-2u(y(\alpha,t),t) \frac{\partial }{\partial \alpha}\left[(uP)(y(\alpha,t),t)\right]\right)\di\alpha\di t\\
&=\int_{\mathbb{R}}\int_{\mathbb{R}} \phi(y(\alpha,t),t)P_x(y(\alpha,t),t)(\beta_\alpha-y_\alpha)(\alpha,t)\di\alpha\di t\\
&\qquad+\int_{\mathbb{R}}\int_{\mathbb{R}}\phi_x\left(\frac{3}{4}u^4-Pu^2\right)\di x\di t+\int_{\mathbb{R}}\int_{\mathbb{R}}\phi u^2P_x\di x\di t.
\end{aligned}
\end{equation}
Comparing \eqref{eq:IntegralLHS} with \eqref{eq:weakF}, we only need to show
\begin{equation}\label{eq:last}
\begin{aligned}
&\quad -\int_{\mathbb{R}}\int_{\mathbb{R}}\phi_x(P^2-P_x^2)\di x\di t \\
&=\int_{\mathbb{R}}\int_{\mathbb{R}} \phi(y(\alpha,t),t)P_x(y(\alpha,t),t)(\beta_\alpha-y_\alpha)(\alpha,t)\di\alpha\di t+\int_{\mathbb{R}}\int_{\mathbb{R}}\phi u^2P_x\di x\di t.
\end{aligned}
\end{equation}
It follows from part (ii) of Definition~\ref{def:Conservative solutions} that $\di\mu(t)=(u^2+u_x^2)(\cdot,t)\di x$ for a.e. $t$. At these times $t$, we have
\[
P(x,t)=\frac{1}{4}\int_{\mathbb{R}}\varphi(x-y)\di\mu(t)+\frac{1}{4}\int_{\mathbb{R}}\varphi(x-y)u^2(y,t)\di y=\frac{1}{2}\int_{\mathbb{R}}\varphi(x-y)\left(u^2+\frac{u_x^2}{2}\right)(x,t)\di x,
\]
which implies
\[
(P-P_{xx})(x,t)=\left(u^2+\frac{u_x^2}{2}\right)(x,t).
\]
Hence,
\begin{equation}
\begin{aligned}
&\quad -\int_{\mathbb{R}}\int_{\mathbb{R}}\phi_x(P^2-P_x^2)\di x\di t=-\int_{A}\int_{\mathbb{R}}\phi_x(P^2-P_x^2)\di x\di t\\
&=\int_{A}\int_{\mathbb{R}}2\phi P_x(P-P_{xx})\di x\di t=\int_{A}\int_{\mathbb{R}}2\phi P_x\left(u^2+\frac{u_x^2}{2}\right)(x,t)\di x\di t\\
&=\int_{A}\int_{\mathbb{R}}\phi u^2P_x\di x\di t+\int_{A}\int_{\mathbb{R}}\phi P_x(u^2+u_x^2)\di x\di t\\
&=\int_{\mathbb{R}}\int_{\mathbb{R}}\phi u^2P_x\di x\di t+\int_{A}\int_{\mathbb{R}}\phi(y(\alpha,t),t)P_x(y(\alpha,t),t)(\beta_\alpha-y_\alpha)(\alpha,t)\di\alpha\di t\\
&=\int_{\mathbb{R}}\int_{\mathbb{R}}\phi u^2P_x\di x\di t+\int_{\mathbb{R}}\int_{\mathbb{R}}\phi(y(\alpha,t),t)P_x(y(\alpha,t),t)(\beta_\alpha-y_\alpha)(\alpha,t)\di\alpha\di t.
\end{aligned}
\end{equation}
This verifies \eqref{eq:last}, and hence, completes the proof of \eqref{eq:weakF}.
\end{proof}
Using Lemma \ref{lmm:weakF}, we have the following lemma for conservation of $\tilde{F}$:
\begin{lemma}\label{lmm:Fconservation}
Let $(u,\mu)$ be a conservative solution to the generalized framework \eqref{eq:gCH1}-\eqref{eq:gCH3} subject to the initial datum $(\bar{u},\bar{\mu})\in\mathcal{D}$ in the sense of Definition~\ref{def:Conservative solutions}. Then $\tilde{F}(u,\mu)$ is conserved, namely $\tilde{F}(u(t),\mu(t)) = \tilde{F}(\bar{u},\bar{\mu})$ for all $t\in\mathbb{R}$.
\end{lemma}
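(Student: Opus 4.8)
The plan is to read off the conservation of $\tilde F(u(t),\mu(t))=\int_{\mathbb R}u\di\mu(t)$ directly from the distributional identity \eqref{eq:weakF} by testing against separated functions $\phi(x,t)=\chi_R(x)\eta(t)$ and sending the spatial cutoff to infinity. Concretely, I would fix $\eta\in C_c^\infty(\mathbb R)$, choose $\chi\in C_c^\infty(\mathbb R)$ with $\chi\equiv1$ on $[-1,1]$, $\su\chi\subset[-2,2]$ and $0\le\chi\le1$, and set $\chi_R(x):=\chi(x/R)$, so that $\chi_R\to1$ pointwise while $\|\chi_R'\|_{L^\infty}\le C/R$ with $\chi_R'$ supported in $R\le|x|\le2R$. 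Since $\phi_t=\chi_R\eta'$ and $\phi_x=\chi_R'\eta$, the identity \eqref{eq:weakF} reads
\begin{equation*}
\int_{\mathbb R}\eta'(t)\!\int_{\mathbb R}\chi_R\,u\di\mu(t)\di t
+\int_{\mathbb R}\eta(t)\!\int_{\mathbb R}\chi_R'\,u^2\di\mu(t)\di t
+\int_{\mathbb R}\eta(t)\!\int_{\mathbb R}\chi_R'\Big(Pu^2-\tfrac34u^4+P^2-P_x^2\Big)\di x\di t=0.
\end{equation*}

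Next I would pass to the limit $R\to\infty$. The key inputs are uniform-in-time bounds. From $u\in L^\infty(\mathbb R;H^1)$ the Sobolev embedding gives $\|u(\cdot,t)\|_{L^\infty}\le C$ for a.e.\ $t$, and the energy conservation recalled before the lemma gives $\mu(t)(\mathbb R)=\bar\mu(\mathbb R)$; hence $\int|u|\di\mu\le C\bar\mu(\mathbb R)$ and $\int u^2\di\mu\le C^2\bar\mu(\mathbb R)$ are bounded uniformly in $t$. Dominated convergence then sends the first term to $\int_{\mathbb R}\eta'(t)F(t)\di t$ with $F(t):=\int_{\mathbb R}u\di\mu(t)$, while the second is $O(1/R)$ and vanishes. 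For the flux term, Remark~\ref{rmk:regularity} together with $H^1\hookrightarrow L^\infty\cap L^2\cap L^4$ shows that $Pu^2,\,u^4,\,P^2,\,P_x^2$ all lie in $L^1(\mathbb R)$ with norms bounded uniformly for $t\in\su\eta$ (here $P,P_x\in C(\mathbb R;H^1)$ is essential); since $\|\chi_R'\|_{L^\infty}\le C/R$, this term is again $O(1/R)$. In the limit one obtains $\int_{\mathbb R}\eta'(t)F(t)\di t=0$ for every $\eta\in C_c^\infty(\mathbb R)$, i.e.\ $F'=0$ in $\mathcal D'(\mathbb R)$, so $F$ coincides a.e.\ with a constant.

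To upgrade this to every $t$, I would establish continuity of $t\mapsto F(t)$ using Lagrangian coordinates: by the change of variables \eqref{eq:mut} and \eqref{eq:g1},
\[
F(t)=\int_{\mathbb R}U(\alpha,t)\,H_\alpha(\alpha,t)\di\alpha,\qquad U:=u(y(\cdot,t),t),\quad H:=\beta-y,
\]
with $\int_{\mathbb R}H_\alpha\di\alpha=\mu(t)(\mathbb R)=\bar\mu(\mathbb R)$. By Remark~\ref{rmk:regularity}, $U(\cdot,t)$ depends continuously on $t$ in $H^1\hookrightarrow L^\infty$ and $H_\alpha(\cdot,t)$ continuously in $L^2$; splitting $F(t)-F(t_0)$ and bounding the first difference by $\|U(\cdot,t)-U(\cdot,t_0)\|_{L^\infty}\bar\mu(\mathbb R)$ and the second by Cauchy--Schwarz yields continuity. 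Hence the a.e.-constant $F$ is constant for all $t$, and evaluating at $t=0$, where $(u(\cdot,0),\mu(0))=(\bar u,\bar\mu)$, identifies the value as $\tilde F(\bar u,\bar\mu)$.

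I expect the main obstacle to be the limit $R\to\infty$ in the flux term, which hinges on the uniform-in-time $L^1_x$ control of $Pu^2-\tfrac34u^4+P^2-P_x^2$; this is precisely where the regularity of Remark~\ref{rmk:regularity} is needed, and where one must check that the estimates do not degenerate at the exceptional times where $\mu$ carries a nontrivial singular part. By contrast, the final continuity step is comparatively soft once the Lagrangian representation of $F$ is available.
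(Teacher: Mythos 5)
Your proposal is correct and follows essentially the same route as the paper: both rest on the weak identity \eqref{eq:weakF} tested against a product of a spatial cutoff and a time cutoff, send the spatial cutoff to infinity using the uniform bounds $\mu(t)(\mathbb{R})=\bar\mu(\mathbb{R})$ and $\|u(\cdot,t)\|_{H^1}\leq C$, and then conclude via time-continuity of $t\mapsto\int_{\mathbb{R}}u\di\mu(t)$. The only minor difference is in the last step: the paper takes the time factor to be a mollified indicator of $[0,t]$ and verifies continuity of $s\mapsto\int_{\mathbb{R}}u\di\mu(s)$ at the endpoints directly from $\mu\in C(\mathbb{R};\mathcal{M}_+(\mathbb{R}))$ and the $L^\infty$-in-time continuity of $u$, whereas you first deduce that this function has vanishing distributional derivative and then prove its continuity through the Lagrangian representation $\int_{\mathbb{R}}U H_\alpha\di\alpha$ and Remark~\ref{rmk:regularity} --- both are valid ways to finish.
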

The proof of Lemma~\ref{lmm:Fconservation} basically follows from \eqref{eq:weakF}, and shares some similarity with \cite[Remark 1.1]{gao2021regularity}. For the sake of self-containedness, we will provide the details here.
\begin{proof}
Without loss of generality, let us consider any arbitrary time $t>0$; the case for $t<0$ will be basically similar, so we will leave this to interested readers.
We choose non-negative smooth  functions $\chi_\e(s)$ and $g_{R}(x)$ for $\e>0$ and $R>0$, where 	$\chi_\e(s)=0$ for $s \le -\e$ and $s \ge t+\e$. 
$\chi_\e(s)=1$ for $s\in (0, t)$,  $\chi_\e'(s) \ge 0$ for $s \in (-\e,0)$ and $\chi_\e'(s) \le 0$ for $s \in (t, t+\e)$.  The function $g_R$ satisfies $g_{R}(x) = 1$ for $|x| \le R$, $g_{R}(x) = 0$ for $|x| \ge 2R$ and $\left|g'_{R}(x)\right| \le \frac{2}{R}$. Finally, let  $\phi(x,t) = \chi_\e(t) g_{R}(x)$, and substituting this $\phi$ into equation \eqref{eq:weakF}, we obtain  
\begin{multline*}
\int_{\mathbb{R}} \int_{\mathbb{R}} \chi'_\e (s) g_{R}(x) u(x,s)\di \mu(s)\di s + \int_{\mathbb{R}} \int_{\mathbb{R}}  \chi_\e (s) g'_{R}(x) u^2(x, s)\di \mu(s)  \di s \\
	+\int_{\mathbb{R}}\int_{\mathbb{R}}\chi_\e (s) g'_{R}(x) \left(Pu^2-\frac{3}{4}u^4+P^2-P_x^2\right)(x,s) \di x\di s=0.
\end{multline*}
Passing to the limit as $R \to \infty$ yields
\begin{equation}\label{eq:afterRtoinfty}
\int_{-\e}^{0}\chi'_\e (s) \int_{\mathbb{R}}u(x,s)\di\mu(s)  \di s + \int_{t}^{t+\e}\chi'_\e (s)\int_{\mathbb{R}}u(x,s)\di\mu(s) \di s =0.
\end{equation}
It is worth noting that
\begin{equation*}
\begin{aligned}
\left|	\int_{-\e}^{0}\chi'_\e (s) \int_{\mathbb{R}}u(x,s)\di\mu(s)  \di s -  \tilde{F}(\bar{u},\bar{\mu})\right|
=&\left|	\int_{-\e}^{0}\chi'_\e (s) \int_{\mathbb{R}}u(x,s)\di\mu(s)  \di s - 	\int_{-\e}^{0}\chi'_\e (s) \int_{\mathbb{R}}\bar{u}(x)\di\bar{\mu}  \di s \right|\\
\le& \int_{-\e}^{0}\chi'_\e (s) \left|\int_{\mathbb{R}}u(x,s)\di\mu(s)  - \int_{\mathbb{R}}\bar{u}(x)\di\bar{\mu}  \right| \di s,
\end{aligned}
\end{equation*}
and
\[
\begin{aligned}
\left|\int_{\mathbb{R}}u(x,s)\di\mu(s)  - \int_{\mathbb{R}}\bar{u}(x)\di\bar{\mu}  \right| &\leq \left|\int_{\mathbb{R}}[u(x,s)-\bar{u}(x)]\di\mu(s)  \right|+\left|\int_{\mathbb{R}}\bar{u}(x)\di\mu(s) - \int_{\mathbb{R}}\bar{u}(x)\di\bar{\mu}  \right|\\
&=: I_1(s)+I_2(s).
\end{aligned}
\]
Since $\mu \in C(\mathbb{R};\mathcal{M}_+(\mathbb{R}))$, we have $I_2(s)\to0$ as $s\to0$. For the first term $I_1$, we use the fact that the solutions $u(\cdot,t)$ is continuous\footnote{See \cite[Theorem 4.2]{holden2007global} and \cite[Proposition 5.2]{holden2007global} for instance.} in time in the space $L^\infty(\mathbb{R})$, and conclude that $I_1(s)\to0$ as $s\to0$.
This shows that $\int_{-\e}^{0}\chi'_\e (s)\int_{\mathbb{R}}u(x,s)\di\mu(s) \di s \to  \tilde{F}(\bar{u},\bar{\mu})$ as $\e \to 0^+$.
On the other hand, one can also verify that $ \int_{t}^{t+\e}\chi'_\e (s)\int_{\mathbb{R}}u(x,s)\di\mu(s) \di s  \to - \tilde{F}(u(t),\mu(t))$ as $\e \to 0^+$ in a similar manner, and hence, passing to the limit $\epsilon\to 0^+$ in \eqref{eq:afterRtoinfty} yields $\tilde{F}(u(t),\mu(t))=  \tilde{F}(\bar{u},\bar{\mu})$. Since the time $t$ is arbitrarily chosen, this completes the proof.

\end{proof}
\begin{remark}[A formal proof]\label{rmk:formal}
Ignoring the regularity issue, one could formally prove the conservation of $\tilde{F}$ more directly as follows:
\begin{equation}\label{eq:issue}
\begin{aligned}
&\;\quad\frac{\di}{\di t}\tilde{F}(u,\mu)\\ &=\frac{\di}{\di t}\int_{\mathbb{R}}u(y(\alpha,t),t)[\beta_\alpha(\alpha,t)-y_\alpha(\alpha,t)]\di\alpha\\
&=-\int_{\mathbb{R}}P_x(y(\alpha,t),t)\left[\beta_\alpha(\alpha,t) -y_\alpha(\alpha,t)\right]\di\alpha +\int_{\mathbb{R}}[3u^3u_x -  2u(uP)_x](y(\alpha,t),t)y_{\alpha }(\alpha,t)\di\alpha\\
&= - \int_{\mathbb{R}}P_x(y(\alpha,t),t)\left[\beta_\alpha(\alpha,t) -y_\alpha(\alpha,t)\right]\di\alpha + \frac{3}{4}\int_{\mathbb{R}} \partial_\alpha u^4( y(\alpha,t),t) \di \alpha\\
&\quad +\int_{\mathbb{R}} \partial_\alpha [u^2(y(\alpha,t),t)]P(y(\alpha,t),t) \di\alpha\\
&= - \int_{\mathbb{R}}P_x(y(\alpha,t),t)\left[\beta_\alpha(\alpha,t) -y_\alpha(\alpha,t)\right]\di\alpha -
\int_{\mathbb{R}}u^2(y(\alpha,t),t)P_x(y(\alpha,t),t)y_\alpha(\alpha,t)\di\alpha
\end{aligned}
\end{equation}
Now using \eqref{eq:Pderivative}, we have
\begin{equation}
\begin{aligned}
\frac{\di}{\di t}\tilde{F}(u,\mu)	=&\int_{\mathbb{R}} \int_{\mathbb{R}}\sgn(\alpha-\theta)e^{-|y(\alpha,t)-y(\theta,t)|}[\beta_\theta(\theta,t)-y_\theta(\theta,t)+u^2(y(\theta,t),t)y_\theta(\theta,t)]\\
&\qquad\times [\beta_\alpha(\alpha,t)-y_\alpha(\alpha,t)+u^2(y(\alpha,t),t)y_\alpha(\alpha,t)]\di\alpha\di\theta=0.
\end{aligned}
\end{equation}
However, the regularity of weak solutions seems  not enough for taking the time derivative under the integral sign in the second line of \eqref{eq:issue}. This is the main reason why we had to use the ``integral form'' of this computation in the rigorous proof of Lemma~\ref{lmm:Fconservation}.
\end{remark}

\section{Stability of peakons}\label{sec:stable}

We will follow the ideas in \cite{constantin2000stability} to show the stability of one peakon under the generalized framework introduced in Section \ref{sec:general}; in particular, we will have to control the singular part of the energy measure. 
More precisely, we have the following
\begin{lemma}\label{lmm:onepeakonlem}
Let $c$ be a positive constant and $(u,\mu)\in\mathcal{D}$. Then
\begin{enumerate}[(i)]
\item for any $\xi\in\mathbb{R}$,
$$\tilde{E}(u, \mu)-E(c\varphi)=\mu_s(\mathbb{R})+\|u-c\varphi(\cdot-\xi)\|_{H^1}^2+4c(u(\xi)-c);$$
\item let $M:=\max_{x\in\mathbb{R}}u(x)$, then
\begin{align}\label{eq:EFrelation}
\tilde{F}(u,\mu)\leq M\tilde{E}(u, \mu)-\frac{2}{3}M^3.
\end{align}
\end{enumerate}
In addition, if there exists a constant $0<\delta<\min\{c/30,\sqrt{2}\}$, such that $\mu_s(\mathbb{R})+\|u-c\varphi\|_{H^1}^2<\delta^2/2$, then
\begin{enumerate}[(i)]
\setcounter{enumi}{2}
\item
\begin{align}\label{eq:distance}
|\tilde{E}(u, \mu)-E(c\varphi)|\leq 4c\delta \quad\mbox{and}\quad |\tilde{F}(u,\mu)-F(c\varphi)|\leq 8c^2\delta;
\end{align}
\item
$$|M-c|\leq\sqrt{6c\delta}.$$
\end{enumerate}
\end{lemma}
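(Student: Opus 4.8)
The plan is to establish the four parts in order, since each later part relies on the earlier ones. Throughout I will use the elementary peakon identities $E(c\varphi)=2c^2$ and $F(c\varphi)=\frac{4}{3}c^3$ (obtained at once from $\varphi_x^2=\varphi^2=e^{-2|x|}$), the distributional identity $\varphi_{xx}=\varphi-2\delta_0$ arising from the jump of $\varphi_x$ across the origin, and the one-dimensional Sobolev estimate $\|f\|_{L^\infty}\leq\frac{1}{\sqrt 2}\|f\|_{H^1}$. For part (i) I would first split $\tilde E(u,\mu)=\mu(\mathbb R)=\mu_s(\mathbb R)+\int_{\mathbb R}(u^2+u_x^2)\di x$, so the claim reduces to showing $E(u)-E(c\varphi)=\|u-c\varphi(\cdot-\xi)\|_{H^1}^2+4c(u(\xi)-c)$. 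Expanding the shifted norm with $\psi:=c\varphi(\cdot-\xi)$, the only term that is not immediate is the cross term $\int_{\mathbb R}(u\psi+u_x\psi_x)\di x$; integrating $\int u_x\psi_x\di x$ by parts against the distribution $\psi_{xx}=\psi-2c\delta_\xi$ collapses this cross term to exactly $2c\,u(\xi)$, which together with $\|\psi\|_{H^1}^2=E(c\varphi)$ yields the identity.

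For part (ii), using $\di\mu=(u^2+u_x^2)\di x+\di\mu_s$ and hence $\tilde F=F(u)+\int_{\mathbb R}u\di\mu_s$, I would write $M\tilde E(u,\mu)-\tilde F(u,\mu)=\big(ME(u)-F(u)\big)+\int_{\mathbb R}(M-u)\di\mu_s$. The singular integral is nonnegative because $u\leq M$ and $\mu_s\geq 0$, so only the Constantin--Strauss inequality $ME(u)-F(u)\geq\frac{2}{3}M^3$ for the absolutely continuous part remains. This I would prove by the classical splitting at a maximizer $\xi$: with $g:=u-u_x$ on $(-\infty,\xi]$ and $h:=u+u_x$ on $[\xi,\infty)$, integration by parts gives $E(u)=2M^2+\int_{-\infty}^\xi g^2\di x+\int_\xi^\infty h^2\di x$ and $F(u)=\frac{4}{3}M^3+\int_{-\infty}^\xi u g^2\di x+\int_\xi^\infty u h^2\di x$, so that $ME(u)-F(u)=\frac{2}{3}M^3+\int_{-\infty}^\xi(M-u)g^2\di x+\int_\xi^\infty(M-u)h^2\di x\geq\frac{2}{3}M^3$.

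For part (iii), the energy bound is immediate from part (i) with $\xi=0$: the first two terms sum to less than $\delta^2/2$ and $|4c(u(0)-c)|=4c|(u-c\varphi)(0)|\leq 4c\cdot\frac{1}{\sqrt2}\|u-c\varphi\|_{H^1}\leq 2c\delta$, so $|\tilde E-E(c\varphi)|\leq \delta^2/2+2c\delta\leq 4c\delta$ since $\delta<c/30<4c$. The flux bound is the main obstacle. I would split $\tilde F-F(c\varphi)=\big(F(u)-F(c\varphi)\big)+\int_{\mathbb R}u\di\mu_s$, bound the singular piece by $\|u\|_{L^\infty}\mu_s(\mathbb R)\leq(c+\delta/2)\,\delta^2/2$, and expand $F(u)-F(c\varphi)$ in $w:=u-c\varphi$ into pieces linear, quadratic, and cubic in $w$. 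Each piece is controlled by Hölder and Sobolev estimates using $\|w\|_{H^1}<\delta/\sqrt2$, $\|w\|_{L^\infty}<\delta/2$, and the explicit norms $\|c\varphi\|_{L^\infty}=\|(c\varphi)_x\|_{L^\infty}=c$ and $\|(c\varphi)^2\|_{L^2}=c^2/\sqrt2$: the linear contribution is $O(c^2\delta)$ and everything else is $O(c\delta^2)+O(\delta^3)$, all of which fit within $8c^2\delta$ once $\delta<c/30$ is used. The only delicate point is bookkeeping the numerical constants so that the total stays below $8c^2\delta$, though there is considerable slack.

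Finally, part (iv) follows by feeding part (iii) into part (ii). By part (ii), $Q(M):=\frac{2}{3}M^3-\tilde E(u,\mu)M+\tilde F(u,\mu)\leq 0$. Adding and subtracting the peakon values and using the factorization $\frac{2}{3}M^3-2c^2M+\frac{4}{3}c^3=\frac{2}{3}(M-c)^2(M+2c)$, I obtain $\frac{2}{3}(M-c)^2(M+2c)\leq(\tilde E-2c^2)M+(\frac{4}{3}c^3-\tilde F)\leq 4c\delta\,M+8c^2\delta=4c\delta(M+2c)$, where the last step invokes the two estimates of part (iii) together with $M>0$. Since $M\geq u(0)\geq c-\delta/2>0$, we have $M+2c>0$ and may divide to conclude $(M-c)^2\leq 6c\delta$, that is $|M-c|\leq\sqrt{6c\delta}$.
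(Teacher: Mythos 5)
Your proposal is correct and follows essentially the same route as the paper: part (i) by expanding the shifted $H^1$ norm and handling the cross term via the jump of $\varphi_x$ at $\xi$, part (ii) by adding the nonnegative singular contribution $\int(M-u)\di\mu_s$ to the Constantin--Strauss inequality $F(u)\le ME(u)-\tfrac23M^3$ (which you reprove via the $u\mp u_x$ splitting rather than citing it), part (iii) by the same expansion in $w=u-c\varphi$ with Sobolev and H\"older bounds, and part (iv) by the same cubic factorization $(M-c)^2(M+2c)$ combined with (ii) and (iii). The only cosmetic differences are that you deduce the energy bound in (iii) from part (i) instead of factoring $E(u)-E(c\varphi)$ as a difference of squares, and you justify $M>0$ via $M\ge u(0)\ge c-\delta/2$ rather than via $M\ge\tilde F/\tilde E\ge c/2$; both work, and the paper's own computation confirms the numerical slack you invoke for the bound $|\tilde F-F(c\varphi)|\le 8c^2\delta$.
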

Compared with Lemmas 1-4 in \cite{constantin2000stability}, the key point is that our energy $\tilde{E}(u, \mu)$ and $\tilde{F}(u,\mu)$ are defined for all time $t$ and are both conserved, hence, the estimates can be applied for all time $t$. Moreover, our estimates contain the singular part $\mu_s(\mathbb{R})$, and this improves the estimates in Lemmas 1-4 in \cite{constantin2000stability}. It is worth noting that in \eqref{eq:distance} we use the coefficients $4$ and $8$ for convenience, although they are not optimal according to the  proof below.  One could also improve the coefficient $\frac{1}{30}$ in $0<\delta<\min\{c/30,\sqrt{2}\}$, but we will not pursue this here. These technical details will be left for interested readers.
\begin{proof}
\begin{enumerate}[(i)]
\item It follows from the definition of $\tilde{E}$ that
\[
\tilde{E}(u, \mu)-E(c\varphi)=\mu_s(\mathbb{R})+E(u)-E(c\varphi).
\]
A direct differentiation yields that $\varphi_x(x)=\varphi(x)$ for $x<0$ and $\varphi_x(x)=-\varphi(x)$ for $x>0$,  so we have
\begin{equation}
\begin{aligned}
\|u-c\varphi(\cdot-\xi)\|_{H^1}^2&=E(u)+E(c\varphi(\cdot-\xi))-2c\int_{\mathbb{R}}u(x)\varphi(x-\xi)\di x-2c\int_{\mathbb{R}}u_x(x)\varphi_x(x-\xi)\di x\\
&=E(u)+E(c\varphi(\cdot-\xi))-2c\int_{\mathbb{R}}u(x)\varphi(x-\xi)\di x\\
&\qquad-2c\int_{-\infty}^\xi u_x(x)\varphi(x-\xi)\di x+2c\int_{\xi}^{\infty} u_x(x)\varphi(x-\xi)\di x\\
&=E(u)-E(c\varphi)-4cu(\xi)+4c^2.
\end{aligned}
\end{equation}
Combining the above two identities, we obtain the desired result.

\item It follows from \cite[Lemma 2]{constantin2000stability} that
for any $u\in H^{1}(\mathbb{R})$, the quantities $E(u)$ and $F(u)$ defined by \eqref{eq:constant} satisfy
\[
F(u)\leq ME(u)-\frac{2}{3}M^3, ~M=\max_{x\in\mathbb{R}}u(x),
\]
then for any $(u,\mu)\in \mathcal{D}$, we have
\begin{equation*}
\tilde{F}(u,\mu)=\int_{\mathbb{R}}u\di\mu_s+F(u)\leq M\mu_s(\mathbb{R})+ME(u)-\frac{2}{3}M^3=M\tilde{E}(u, \mu)-\frac{2}{3}M^3.
\end{equation*}

\item Due to the hypothesis $\mu_s(\mathbb{R})+\|u-c\varphi\|_{H^1}^2<\delta^2/2<1$, we also have
\begin{equation}\label{eq:delta_est}
\begin{aligned}
\mu_s^{1/2}(\mathbb{R})+\|u-c\varphi\|_{H^1}<\delta,
\end{aligned}
\end{equation}
and by Morrey's inequality,
\begin{align}
\|u\|_{L^\infty}\leq \frac{1}{\sqrt{2}}\|u\|_{H^1} \leq \frac{1}{\sqrt{2}}(\|c\varphi\|_{H^1}+\delta)=c+\frac{\delta}{\sqrt{2}}.
\end{align}
A direct computation yields $\|c\varphi\|_{H^1}=\sqrt{2}c$. Since $\delta<c/2$ and \eqref{eq:delta_est}, we have
\begin{equation*}
\begin{aligned}
|\tilde{E}(u, \mu)-E(c\varphi)|&=|\mu_s(\mathbb{R})+E(u)-E(c\varphi)|
\leq\mu_s(\mathbb{R})+|(\|u\|_{H^1}-\|c\varphi\|_{H^1})(\|u\|_{H^1}+\|c\varphi\|_{H^1})|\\
&\leq\mu_s(\mathbb{R})+(2\sqrt{2}c+\delta)\|u-c\varphi\|_{H^1}
\leq\frac{\delta^2}{2}+\delta(2\sqrt{2}c+\delta)\leq 4c\delta,
\end{aligned}
\end{equation*}
and by H\"older's inequality
\begin{equation*}
\begin{aligned}
&\quad\; |\tilde{F}(u,\mu)-F(c\varphi)| \leq \left|\int_{\mathbb{R}}u\di\mu_s\right|+|F(u)-F(c\varphi)|\\
&\leq\left(c+\frac{\delta}{\sqrt{2}}\right)\mu_s(\mathbb{R})+\left|\int_{\mathbb{R}}(u-c\varphi)(u^2+u_x^2)\di x+\int_{\mathbb{R}}c\varphi(u^2+u_x^2-c^2\varphi^2-c^2\varphi_x^2)\di x\right|\\
&\leq\left(c+\frac{\delta}{\sqrt{2}}\right)\frac{\delta^2}{2}+\left|\int_{\mathbb{R}}(u-c\varphi)(u^2+u_x^2)\di x+\int_{\mathbb{R}}c\varphi[(u-c\varphi)^2+(u_x-c\varphi_x)^2]\di x\right.\\
&\qquad\left.+\int_{\mathbb{R}}c\varphi[2(u-c\varphi)c\varphi+2(u_x-c\varphi_x)c\varphi_x]\di x\right|\\
&\leq\left(c+\frac{\delta}{\sqrt{2}}\right)\frac{\delta^2}{2}+\|u-c\varphi\|_{L^\infty}E(u)+\|c\varphi\|_{L^\infty}\|u-c\varphi\|_{H^1}^2+2\|c\varphi\|_{L^\infty}\|u-c\varphi\|_{H^1}\|c\varphi\|_{H^1}.
\end{aligned}
\end{equation*}
It follows from \eqref{eq:delta_est} that $\|u-c\varphi\|_{L^\infty}\leq \frac{1}{\sqrt{2}}\|u-c\varphi\|_{H^1}\leq \frac{\delta}{\sqrt{2}}$ and $E(u)= \|u\|^2_{H^1}\leq (\sqrt{2}c+\delta)^2$. Furthermore, a direct computation yields $\|c\varphi\|_{L^\infty}=c$ and $\|c\varphi\|_{H^1}=\sqrt{2}c$, so we finally have 
\begin{align*}
|\tilde{F}(u,\mu)-F(c\varphi)|\leq\left(c+\frac{\delta}{\sqrt{2}}\right)\frac{\delta^2}{2}+\frac{\delta}{\sqrt{2}}(2c^2+2\sqrt{2}c\delta+\delta^2)+c\frac{\delta^2}{2}+2c\delta\sqrt{2}c\leq 8c^2\delta.
\end{align*}

\item A direct calculation yields that
\begin{align}\label{eq:valueEF}
E(c\varphi)=2c^2\quad\mbox{and}\quad F(c\varphi)=\frac{4}{3}c^3.
\end{align}
Since \eqref{eq:valueEF} and $\delta<c/30$,  using \eqref{eq:EFrelation} and \eqref{eq:distance}, we have
\[
M\geq \frac{\tilde{F}(u,\mu)}{\tilde{E}(u, \mu)}\geq\frac{c}{2}.
\]
Define
\[
Q(y):=y^3-\frac{3}{2}y\tilde{E}(u, \mu)+\frac{3}{2}\tilde{F}(u,\mu),
\]
and
\[
Q_0(y):=y^3-\frac{3}{2}yE(c\varphi)+\frac{3}{2}F(c\varphi).
\]
Then it follows from part (ii) that
\begin{equation}\label{eq:Q(M)leq0}
Q(M)\leq 0.
\end{equation}
Furthermore, using \eqref{eq:valueEF}, we also have
\[
Q_0(y)=(y-c)^2(y+2c).
\]
Hence, using \eqref{eq:Q(M)leq0} and part (iii), we finally have
\begin{equation*}
\begin{aligned}
(M-c)^2(M+2c)&=Q_0(M)=Q(M)+\frac{3}{2}M(\tilde{E}(u, \mu)-E(c\varphi))-\frac{3}{2}(\tilde{F}(u,\mu)-F(c\varphi))\\
&\leq\frac{3}{2}M\cdot 4c\delta+\frac{3}{2}\cdot 8c^2\delta=6c(M+2c)\delta,
\end{aligned}
\end{equation*}
which implies $|M-c|\leq \sqrt{6c\delta}$.
\end{enumerate}
\end{proof}

Using Lemma~\ref{lmm:onepeakonlem}, we can immediately show our main result, namely Theorem~\ref{thm:onepeakonthm}, for the stability of peakons as follows.

\begin{proof}[Proof of Theorem \ref{thm:onepeakonthm}]
Let $\delta:=\frac{c\e^4}{6^4(1+c)^4}$, and $M_t:=\max_{x\in\mathbb{R}}u(t,x)$ for any $t\in\mathbb{R}$. Using Lemma~\ref{lmm:onepeakonlem} and the fact that $u(\xi(t),t)=M_t$, we have, for any $t\in\mathbb{R}$,
\begin{align*}
\mu_s(t)(\mathbb{R})+\|u(\cdot,t)-c\varphi(\cdot-\xi(t))\|_{H^1}^2\leq& |\tilde{E}(u(t), \mu(t))-E(c\varphi)|+4c|M_t-c|\\
=&|\tilde{E}(\bar{u}, \bar{\mu})-E(c\varphi)|+4c|M_t-c|\\
\leq&4c\frac{c\e^4}{6^4(1+c)^4}+4c\sqrt{6c\frac{c\e^4}{6^4(1+c)^4}}\leq\e^2.
\end{align*}

\end{proof}

\begin{remark}
Let $(u, \mu)$ be a conservative solution to  the generalized framework \eqref{eq:gCH1}-\eqref{eq:gCH3}  of the Camassa-Holm equation subject to the initial data $(\bar{u}, \bar{u}_x^2 \di x )$, which means that the initial energy measure $\bar{\mu}$ does not contain any singular part. If
\[
\|\bar{u}-c\varphi\|_{H^1}^2\leq\frac{c^2}{2}\left[\frac{\e}{6(1+c)}\right]^8,
\]
then for any $t\in\mathbb{R}$, we obviously  have
\[
\mu_s(t)(\mathbb{R}) + \|u(\cdot,t)-c\varphi(\cdot-\xi(t))\|_{H^1}^2\leq \e^2 ,
\]
because of Theorem \ref{thm:onepeakonthm},
where $\xi(t)\in\mathbb{R}$ is any point where the function $u(\cdot,t)$ attains its maximum. In particular, we know that for the usual $H^1(\mathbb{R})$ initial data, the stability of the peakon holds, even after wave breaking. This extends the  stability results of Constantin and Strauss \cite{constantin2000stability} globally-in-time.
\end{remark}

\noindent\textbf{Acknowledgements} Y. Gao is supported by  the National Natural Science Foundation grant 12101521 of China  and  the Start-up fund from The Hong Kong Polytechnic University with project number P0036186. 
T. K. Wong is partially supported by the HKU Seed Fund for Basic Research under the project code 201702159009, the Start-up Allowance for Croucher Award Recipients, and Hong Kong General Research Fund (GRF) grant ``Solving Generic Mean Field Type Problems: Interplay between Partial Differential Equations and Stochastic Analysis'' with project number 17306420, and Hong Kong GRF grant ``Controlling the Growth of Classical Solutions of a Class of Parabolic Differential Equations with Singular Coefficients: Resolutions for Some Lasting Problems from Economics'' with project number 17302521.

\bibliographystyle{plain}
\bibliography{bibofPeak}

\end{document}